\newtheorem{theorem}{Theorem}[section]
\newtheorem{proposition}[theorem]{Proposition}
\newtheorem{remark}[theorem]{Remark}
\def\cF{\mathcal{F}}
\def\bR{\mathbb{R}}
\newcommand*\samethanks[1][\value{footnote}]{\footnotemark[#1]}
\begin{document}

\title{SPDEs with rough noise in space:\\ H\"older continuity of the solution}

\author{Raluca M. Balan\footnote{Department of Mathematics and Statistics, University of Ottawa,
585 King Edward Avenue, Ottawa, ON, K1N 6N5, Canada. E-mail
address: rbalan@uottawa.ca. Research supported by a
grant from the Natural Sciences and Engineering Research Council
of Canada.}\and
Maria Jolis\thanks{Departament de Matem\`atiques,
Universitat Aut\`{o}noma de Barcelona, 08193 Bellaterra (Barcelona), Catalonia, Spain.
E-mail addresses: mjolis@mat.uab.cat, quer@mat.uab.cat. Research supported by grants
MCI-FEDER MTM2012-33937 and SGR 2014-SGR-422. Corresponding author: L. Quer-Sardanyons.}
\and
Llu\'{i}s Quer-Sardanyons  \samethanks 
}
\date{Jannuary 15, 2015}
\maketitle

\begin{abstract}
\noindent We consider the stochastic wave and heat equations with affine multiplicative Gaussian noise which is white in time and behaves in 
space like the fractional Brownian motion with index $H \in (\frac14,\frac12)$. The existence and uniqueness of the solution to 
these equations has been proved recently by the authors.
In the present note we show that these solutions have modifications which are H\"older continuous in space of order smaller than $H$, and H\"older continuous in time of order smaller than $\gamma$, where $\gamma=H$ for the wave equation and $\gamma=H/2$ for the heat equation.
\end{abstract}

\noindent {\em MSC 2010:} Primary 60H15; secondary 60H05

\vspace{3mm}

\section{Introduction}

In this article, we consider the stochastic wave equation:
\begin{equation}
\left\{\begin{array}{rcl}
\displaystyle \frac{\partial^2 u}{\partial t^2}(t,x) & = & \displaystyle \frac{\partial^2 u}{\partial x^2}(t,x)+\sigma(u(t,x))\dot{X}(t,x), \quad t\in [0,T], \ x \in \bR \\[2ex]
\displaystyle u(0,x) & = & u_0(x), \\[1ex]
\displaystyle \frac{\partial u}{\partial t}(0,x) & = & v_0(x),
\end{array}\right. \label{wave} \tag{SWE}
\end{equation}
and the stochastic heat equation:
\begin{equation}
\left\{\begin{array}{rcl}
\displaystyle \frac{\partial u}{\partial t}(t,x) & = &\displaystyle  \frac{1}{2}\, \frac{\partial^2 u}{\partial x^2}(t,x) + \sigma(u(t,x))\dot{X}(t,x), \quad t\in [0,T], \ x \in \bR \\[2ex]
\displaystyle u(0,x) & = & u_0(x)
\end{array}\right. \label{heat} \tag{SHE}
\end{equation}
where $\sigma(x)=ax+b$ with $a,b \in \bR$, and $u_0$ and $v_0$ are uniformly H\"older continuous 
of order $H$. We assume that the noise $\dot{X}$ is white in time and behaves in space like a fractional Brownian motion (fBm)
with index $H \in (\frac14,\frac12)$. More precisely, $\dot{X}$ is the formal derivative of a zero-mean 
Gaussian process $X=\{X(\varphi); \varphi \in C_0^{\infty}((0,\infty) \times \bR)\}$ with 
covariance given by:
\begin{equation}
\label{cov-noise}
E[X(\varphi)X(\psi)]=c_H\int_{0}^{\infty}\int_{\bR} \cF \varphi(t,\cdot)(\xi) \overline{\cF 
\psi(t,\cdot)(\xi)}|\xi|^{1-2H} d\xi dt,
\end{equation}
for any $\varphi,\psi \in C_0^{\infty}((0,\infty) \times \bR)$, where $c_H=\Gamma(2H+1) \sin (\pi H)/(2\pi)$. Here $C_0^{\infty}(\bR_{+} \times \bR)$ denotes the space of infinitely differentiable functions on $\bR_+ \times \bR$ with compact support.

Since the Fourier transform of the measure $\mu(d\xi)=c_H|\xi|^{1-2H}d\xi$ is not given by a locally 
integrable function, the study of equations with this kind of noise does not fall under the general theory 
of SPDEs with colored noise initiated by \cite{dalang99} and \cite{PZ}. Instead, the family $X$ can be seen as 
a random stationary distribution (see \cite{ito54,yaglom57}), which allows to define stochastic 
integrals with respect to this type of noise following the ideas of Basse-O'Connor {\it{et al.}} 
\cite{BGP12}. See \cite[Sec. 2]{BJQ} for a detailed description of the noise in this setting 
as well as the construction of the corresponding stochastic integrals. 

In \cite[Thm. 1.1]{BJQ}, we proved the existence of a unique mild solution to equations 
\eqref{wave} and \eqref{heat} 
in the space $\mathcal{X}_p$, for any fixed $p\geq 2$, where the latter is defined as 
the space of $L^2(\Omega)$-continuous and
adapted processes $u=\{u(t,x); t \in [0,T], x\in \bR \}$ satisfying
$$\sup_{(t,x) \in [0,T] \times \bR}E\big[|u(t,x)|^p\big]<\infty$$
and
\begin{equation}
 \sup_{(t,x) \in [0,T] \times \bR} 
 \int_0^t \int_{\bR^2}G_{t-s}^{2}(x-y)\dfrac{\Big(E\big[|u(s,y)-u(s,z)|^p\big]\Big)^{2/p}}{|y-z|^{2-2H}}\,dy\,dz\,ds <\infty.
 \label{eq:48}
\end{equation}
Here, $G_t(x)$ denotes the fundamental solution of the wave (respectively heat) equation, that is
$$G_t(x)=\frac{1}{2}1_{\{|x|<t\}} \quad \mbox{for the wave equation},$$
$$G_t(x)=\frac{1}{(2\pi t)^{1/2}} \exp\left(-\frac{|x|^2}{2t}\right) \quad \mbox{for the heat equation.}$$
The method used in \cite{BJQ} to prove existence and uniqueness of solution is based on a 
Picard iteration scheme. We point out that the term (\ref{eq:48}) pops up in a quite natural way, 
for we used some harmonic analysis techniques related to fractional Sobolev spaces. 

We recall that a random field $u=\{u(t,x); t \in [0,T],
x\in \bR \}$ is a solution of \eqref{wave} (respectively \eqref{heat})
if $u$ is predictable and, for any $(t,x) \in [0,T] \in \bR$,
\begin{equation*}
\label{int-eq}
u(t,x)=w(t,x)+\int_0^t\int_{\bR} G_{t-s}(x-y)\,\sigma(u(s,y))\,X(ds,dy) \quad {\rm a.s.}
\end{equation*}
where the stochastic integral is interpreted in the sense explained in 
\cite[Sec. 2]{BJQ}. In the above expression, $w(t,x)$ denotes the solution of the 
corresponding homogeneous equation; see the beginning of Section \ref{sec:proof}
for the precise expression of $w(t,x)$ for wave and heat equations.

The goal of the present note is to show that the solutions of \eqref{wave} and \eqref{heat} 
have H\"older continuous modifications in space and time. 
More precisely, we will prove the following result.

\begin{theorem}
\label{main-th}
Let $u=\{u(t,x);t \in [0,T],x \in \bR\}$ be the solution to equation \eqref{wave}, respectively equation \eqref{heat}. There exists $h_0\in (0,1)$ such that, for all $|h|\leq h_0$ and for any $p \geq 2$, we have:
$$\sup_{(t,x) \in [0,T] \times \bR} \Big( E\big[|u(t,x+h)-u(t,x)|^p]\Big)^{\frac1p} \leq C_p\, |h|^H$$
and
$$\sup_{(t,x) \in [0,T \wedge (T-h)] \times \bR} \Big(E \big[|u(t+h,x)-u(t,x)|^p\big]\Big)^{\frac1p}
  \leq C_p\, |h|^{\gamma},$$
where $C_p>0$ is a constant depending on $p$, 
and $\gamma=H$ for the wave equation and $\gamma=\frac H2$ for the heat
equation.
 { Therefore, the random field $u$ has a modification that has  
 $(\gamma',\,H')$-H\"older-continuous sample paths,
  for any $\gamma'<\gamma$
 and any $H'<H$.}
\end{theorem}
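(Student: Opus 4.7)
The plan is to reduce the statement to uniform $L^p$-estimates on the spatial increment $u(t,x+h)-u(t,x)$ and the temporal increment $u(t+h,x)-u(t,x)$ for arbitrary $p\geq 2$; the joint Hölder modification with orders $(\gamma',H')$ then follows from the two-parameter Kolmogorov continuity criterion by letting $p$ be large enough. Writing $u = w + I$ with $I(t,x) = \int_0^t\int_{\bR} G_{t-s}(x-y)\sigma(u(s,y))\,X(ds,dy)$, each increment splits into a deterministic and a stochastic part; the temporal stochastic part further decomposes as $\int_0^t\int_{\bR}[G_{t+h-s}-G_{t-s}](x-y)\sigma(u)\,X(ds,dy) + \int_t^{t+h}\int_{\bR} G_{t+h-s}(x-y)\sigma(u)\,X(ds,dy)$.

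The deterministic contribution is direct. For the wave equation, d'Alembert's formula $w(t,x) = \tfrac12[u_0(x+t)+u_0(x-t)] + \tfrac12\int_{x-t}^{x+t} v_0(y)\,dy$ yields spatial and temporal $H$-Hölder regularity from the hypothesis on $u_0,v_0$. For the heat equation, $w(t,x) = (G_t * u_0)(x)$ preserves spatial $H$-Hölder regularity, while the temporal rate $H/2$ follows from standard scaling of $G_{t+h}-G_t$ tested against an $H$-Hölder function.

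For the stochastic contribution I would invoke the $L^p$-BDG-type inequality underlying \eqref{eq:48} (from \cite{BJQ}), which for a generic integrand $\Psi$ reads
\[
\Bigl\|\int_0^t\!\!\int_{\bR} \Psi(s,y)\,X(ds,dy)\Bigr\|_{L^p}^2 \leq C_p\int_0^t\!\!\int_{\bR^2} \frac{\bigl(E|\Psi(s,y)-\Psi(s,z)|^p\bigr)^{2/p}}{|y-z|^{2-2H}}\,dy\,dz\,ds.
\]
Applied to the spatial increment with $\Psi(s,y) = \Delta_h G_{t-s}(y)\,\sigma(u(s,y))$, where $\Delta_h G_{t-s}(y):=G_{t-s}(x+h-y)-G_{t-s}(x-y)$, the product rule splits the integrand as $\Delta_h G_{t-s}(y)[\sigma(u(s,y))-\sigma(u(s,z))] + [\Delta_h G_{t-s}(y)-\Delta_h G_{t-s}(z)]\sigma(u(s,z))$. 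The first summand absorbs the Lipschitz constant of $\sigma$ and produces a self-referential term that, combined with the a priori finiteness of \eqref{eq:48}, closes by a Grönwall/fixed-point argument. The second summand is purely deterministic; by Plancherel it reduces to $\int_0^t\!\int_{\bR} |\cF G_{t-s}(\xi)|^2\,|e^{ih\xi}-1|^2\,|\xi|^{1-2H}\,d\xi\,ds$, and the rescaling $\xi\mapsto\xi/|h|$ — together with the explicit symbols $\cF G_{t-s}(\xi)=\sin((t-s)\xi)/\xi$ (wave) and $\cF G_{t-s}(\xi)=e^{-(t-s)\xi^2/2}$ (heat) — extracts the factor $|h|^{2H}$ after bounding the resulting integral uniformly in $s$. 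The temporal analysis follows the same blueprint, with $\Delta_h G$ replaced by $G_{t+h-s}-G_{t-s}$ and the boundary integral over $(t,t+h]$ treated separately.

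The main obstacle is the sharp temporal exponent $H/2$ for the heat equation, which requires a careful split of the $s$-integration near $s=t$ where $G_{t+h-s}-G_{t-s}$ concentrates; the wave case is cleaner thanks to the simple Fourier symbol of $G_t$, but the discontinuity of $G_t(x)=\tfrac12\mathbf 1_{\{|x|<t\}}$ rules out pointwise kernel estimates and forces the Fourier route throughout. Once the $L^p$-increment bounds of orders $H$ in space and $\gamma$ in time are established uniformly in $(t,x)$, Kolmogorov's criterion applied with $p$ taken sufficiently large concludes the existence of the claimed $(\gamma',H')$-Hölder-continuous modification for every $\gamma'<\gamma$ and $H'<H$.
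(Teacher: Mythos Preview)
Your overall architecture---BDG-type inequality, product-rule split of the integrand, Fourier estimates for the deterministic pieces, then Kolmogorov---matches the paper's. The gap is in the sentence ``produces a self-referential term that, combined with the a priori finiteness of \eqref{eq:48}, closes by a Gr\"onwall/fixed-point argument.'' After the split, the first summand contributes (up to constants)
\[
\int_0^t\!\int_{\bR}|\Delta_h G_{t-s}(x-y)|^2\Bigl(\int_{\bR}\frac{\|u(s,y+z)-u(s,y)\|_p^2}{|z|^{2-2H}}\,dz\Bigr)\,dy\,ds,
\]
and to extract $|h|^{2H}$ you need a uniform bound on the inner $dz$-integral, i.e.\ essentially $\|u(s,y+z)-u(s,y)\|_p\le C|z|^H$---which is precisely the estimate being proved. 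Condition \eqref{eq:48} does not supply this: it only asserts finiteness after integration against $G_{t-s}^2(x-y)$, not against $|\Delta_h G_{t-s}(x-y)|^2$, and carries no rate in $h$. Nor is there a Gr\"onwall structure in $t$, since the inequality relates $\phi(h):=\sup_{t,x}\|u(t,x+h)-u(t,x)\|_p$ to $\int_{|z|\le h_0}\phi(z)^2|z|^{2H-2}\,dz$ rather than to $\int_0^t\phi_s(h)\,ds$. The same circularity appears in the temporal analysis.

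The paper breaks this loop by working with the Picard iterates $u^n$ instead of $u$. The H\"older bound $\|u^n(t,x+h)-u^n(t,x)\|_p\le C_n|h|^H$ (and its temporal analogue) is proved by induction on $n$, starting from the deterministic $u^0=w$; the point of the whole argument is to show that the recursion has the form $C_{n+1}\le C\bigl(c(h_0)+\bar c(h_0)\,C_n\bigr)$ with $\bar c(h_0)\to 0$ as $h_0\to 0$. Choosing $h_0$ small makes $C\bar c(h_0)<1$, hence $\sup_n C_n<\infty$, and one then passes to the limit using the known $L^2(\Omega)$-convergence $u^n\to u$. This tracking of constants along the Picard scheme is exactly what replaces your missing a priori bound; if by ``fixed-point argument'' you meant running the solution map on a space of processes already carrying the $|h|^H$ rate, that is equivalent to the paper's route, but it is not what invoking \eqref{eq:48} gives you.
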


We note that the stochastic heat equation with the same noise
$\dot{X}$ as above has been thoroughly studied in the recent
preprint \cite{HHLNT}, in the case of a Lipschitz function $\sigma$
with Lipschitz derivative and such that $\sigma(0)=0$. In this
article, the authors have obtained an exponential upper bound for
the $p$-th moment of the solution and have shown that that this
solution has a H\"older continuous modification of order
$(\frac{H}{2}-\varepsilon,H-\varepsilon)$ for any $\varepsilon>0$
(see Theorem 4.31 of \cite{HHLNT}). Moreover, in the case when
$\sigma(x)=x$, the authors of \cite{HHLNT} have obtained a
Feynman-Kac representation for the moments of the solution to the
heat equation, which was used to show that these moments grow
exponentially in time (Theorems 5.7 and 5.8 of \cite{HHLNT}). These
impressive investigations were continued in the recent preprint
\cite{HLN} in which the authors computed the exact Lyapunov
exponents and the lower and upper growth indices of the solution of
the heat equation with noise $\dot{X}$, in the case $\sigma(x)=x$.

One of the key steps which allow the authors of \cite{HHLNT} to obtain a solution 
with $\sigma$ satisfying the above-mentioned conditions is based on a 
localization argument which is tied to the parabolic nature of the heat equation. 
In this sense, an important characteristic of our method in \cite{BJQ} 
is that we can deal with both heat and wave equations at the same time.

In the next Section we proceed to prove Theorem \ref{main-th}.


\section{Proof of Theorem \ref{main-th}}
\label{sec:proof}

The proof of Theorem \ref{main-th} will follow from a careful analysis of the $p$-th moments of the increments of the Picard iteration sequence.

Along this note $w(t,x)$ will denote 
the solution of the homogeneous wave equation with the same initial
conditions as \eqref{wave} (respectively of the homogeneous heat
equation), that is :
$$w(t,x)=\frac{1}{2}\int_{x-t}^{x+t}v_0(y)dy+\frac{1}{2}
\Big(u_0(x+t)+u_0(x-t)\Big) \quad \mbox{for the wave equation and}$$
$$w(t,x)= \int_{\bR}G_t(x-y)u_0(y)dy \quad \mbox{for the heat equation}.\phantom{xxxxxxxxxxxxxxxxxxxxx}$$

Let $(u^n)_{n \geq0}$ be the Picard iteration scheme defined by: $u^0(t,x)=w(t,x)$ and
$$u^{n+1}(t,x)=w(t,x)+\int_0^t \int_{\bR}G_{t-s}(x-y)\sigma(u^n(s,y))X(ds,dy), \quad n \geq 0.$$

First, in Section 3.2 of \cite{BJQ} we proved the following result.

\begin{theorem}
\label{Picard-well-def}
Let $p \geq 2$ be fixed. Then, for any $n \geq 0$,
\begin{equation}
\left.\begin{array}{rcl}
& & \displaystyle u^n(t,x) \ \mbox{is well-defined for any} \ (t,x) \in [0,T] \times \bR,  \\[2ex]
& & \displaystyle \sup_{(t,x) \in [0,T] \times \bR}E|u^n(t,x)|^p<\infty, \quad \mbox{and} \\[1ex]
& & \displaystyle \sup_{(t,x) \in [0,T] \times \bR} \int_0^t \int_{\bR^2}G_{t-s}^2(x-y)\frac{\Big(E|u^n(s,y)-u^n(s,z)|^p\Big)^{2/p}}{|y-z|^{2-2H}}\,dy \,dz \,ds<\infty
\end{array}\right\} \label{propertyP} \tag{P}
\end{equation}
and, for any $h\in \bR$ with $|h|<1$,
\begin{equation}
\left.\begin{array}{rcl}
& & \displaystyle \sup_{(t,x) \in [0,T] \times \bR}E|u^n(t,x+h)-u^n(t,x)|^2 \leq C_n |h|^{2H} \\
& & \displaystyle \sup_{(t,x) \in [0,T \wedge (T-h)] \times
\bR}E|u^n(t+h,x)-u^n(t,x)|^2 \leq C_n |h|^{\beta},
\end{array} \right\} \label{propertyQ} \tag{Q}
\end{equation}
where $\beta=2H$ for the wave equation, and $\beta=H$ for the heat
equation. Here $C_n$ is a constant which depends on $n$ (and also on
$H,T,\sigma,u_0$ and $v_0$).
\end{theorem}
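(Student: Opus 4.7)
The plan is to prove the result by induction on $n$, with the essential analytic input being the Burkholder--Davis--Gundy type $L^p$ estimate for stochastic integrals against $X$ established in \cite{BJQ}. Schematically, for a predictable integrand $\phi$,
\begin{equation*}
E\Big|\int_0^t\!\!\int_{\bR}\phi(s,y)\,X(ds,dy)\Big|^p \leq C_p \bigg(\int_0^t\!\!\int_{\bR^2}\frac{\bigl(E|\phi(s,y)-\phi(s,z)|^p\bigr)^{2/p}}{|y-z|^{2-2H}}\,dy\,dz\,ds\bigg)^{p/2},
\end{equation*}
which identifies the quantity in \eqref{eq:48} as the natural object to propagate along the iteration and explains why the conditions (P) and (Q) must be coupled.

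Base case $n=0$: since $u^0=w$ is deterministic, (P) reduces to boundedness of $w$ on $[0,T]\times\bR$ together with convergence of $\int_0^t\!\int_{\bR^2} G_{t-s}^2(x-y)|w(s,y)-w(s,z)|^2|y-z|^{2H-2}dy\,dz\,ds$ uniformly in $(t,x)$. Both follow from the explicit representations of $w$ and the uniform $H$-Hölder continuity of $u_0,v_0$: for the wave case $w$ is a linear combination of translates of $u_0$ and integrals of $v_0$, inheriting $H$-Hölder regularity in both variables; for the heat case $G_t\ast u_0$ inherits $H$-Hölder regularity in $x$ and $(H/2)$-Hölder regularity in $t$ by classical semigroup arguments. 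This yields (Q) in the base case with $\beta=2H$ and $\beta=H$ respectively.

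Inductive step: assume $u^n$ satisfies (P) and (Q). Affinity of $\sigma$ gives $\sigma(u^n(s,y))-\sigma(u^n(s,z))=a\,(u^n(s,y)-u^n(s,z))$, so (P) for $u^n$ makes the right-hand side of the BDG bound finite when the integrand is $G_{t-s}(x-y)\sigma(u^n(s,y))$; hence $I^n(t,x):=\int_0^t\!\int_\bR G_{t-s}(x-y)\sigma(u^n(s,y))X(ds,dy)$ is well defined and we set $u^{n+1}=w+I^n$. Applying the BDG inequality to $I^n(t,x)$ and to $I^n(t,x+h)-I^n(t,x)$ reduces every estimate, via the identity just noted, to deterministic kernel computations on the Fourier side. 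The standard bound $\int_0^t\!\int_\bR |\cF G_{t-s}(\xi)|^2\,|e^{ih\xi}-1|^2\,|\xi|^{1-2H}d\xi\,ds\leq C\,|h|^{2H}$ then delivers simultaneously the uniform $L^p$ bound, the integral condition (P) for $u^{n+1}$, and the spatial part of (Q) with exponent $2H$.

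The technically heaviest step is the temporal bound for $I^n$, which I split as
\begin{equation*}
\int_0^t\!\!\int_\bR \!\![G_{t+h-s}-G_{t-s}](x-y)\sigma(u^n(s,y))X(ds,dy) \;+\; \int_t^{t+h}\!\!\int_\bR \!\!G_{t+h-s}(x-y)\sigma(u^n(s,y))X(ds,dy).
\end{equation*}
Each piece is bounded by BDG combined with a Fourier representation; the second one reduces to $\int_0^h\!\int_\bR |\cF G_u(\xi)|^2|\xi|^{1-2H}d\xi\,du$, and a change of variables using $\cF G_u(\xi)=\sin(u|\xi|)/|\xi|$ in the wave case and $\cF G_u(\xi)=e^{-u\xi^2/2}$ in the heat case yields $|h|^{2H}$ and $|h|^H$ respectively; this scaling discrepancy is precisely the source of the distinct exponents $\beta=2H$ versus $\beta=H$. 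The first piece is handled by analogous Fourier estimates on $|\cF G_{t+h-s}(\xi)-\cF G_{t-s}(\xi)|$. Adding the (easy) contribution of $w$ closes (Q) for $u^{n+1}$ with a new constant $C_{n+1}$ depending on $n$ — uniformity in $n$ is not required here — completing the induction.
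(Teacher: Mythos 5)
First, note that the paper does not actually prove this theorem here: it is quoted from Section 3.2 of \cite{BJQ}. Your overall architecture --- induction on $n$, the BDG-type moment inequality of \cite[Thm.\ 2.9]{BJQ} as the analytic engine, and the observation that (P) and (Q) must be propagated together --- matches the strategy of \cite{BJQ} and of the closely parallel Proposition \ref{prop:1} in the present paper. The base case and the identification of the kernel integrals $\int_0^h\int_{\bR}|\cF G_u(\xi)|^2|\xi|^{1-2H}\,d\xi\,du$ as the source of $\beta=2H$ versus $\beta=H$ are also correct.

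The gap is in the inductive step, where you assert that the BDG bound ``reduces every estimate \ldots to deterministic kernel computations on the Fourier side.'' It does not. The BDG inequality controls the $p$-th moment by the Gagliardo-type integral of the increments of the \emph{full integrand} $\phi(s,y)=K(s,y)\sigma(u^n(s,y))$ (with $K$ the relevant kernel or kernel difference), and $\phi(s,y)-\phi(s,z)$ splits into two pieces: $[K(s,y)-K(s,z)]\sigma(u^n(s,z))$, which is the piece your Fourier bound with $|e^{ih\xi}-1|^2$ handles, and $K(s,y)[\sigma(u^n(s,y))-\sigma(u^n(s,z))]$, which it does not. This second piece --- the terms called $I_1$, $J_{11}$, $J_{21}$ in the proof of Proposition \ref{prop:1} --- pairs the \emph{squared} kernel (or squared kernel difference) against the spatial increments of $u^n$, and the only way to extract the factor $|h|^{2H}$ (resp.\ $|h|^{\beta}$) from it is to bound the inner integral $\int_{\bR}\big(E|u^n(s,y+z)-u^n(s,y)|^p\big)^{2/p}|z|^{2H-2}\,dz$ uniformly in $(s,y)$ by splitting into $\{|z|\le 1\}$, where the induction hypothesis (Q) makes the integrand of order $C_n|z|^{4H-2}$ (integrable at $0$ precisely because $H>\frac14$), and $\{|z|>1\}$, where the uniform moment bound gives order $|z|^{2H-2}$; only then does the remaining factor $\int_0^t\int_{\bR}|G_{t-s}(x+h-y)-G_{t-s}(x-y)|^2\,dy\,ds\le C|h|$ (an unweighted $L^2$ Fourier computation, not the $|\xi|^{1-2H}$-weighted one you quote) produce the H\"older exponent. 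Your sketch never invokes the induction hypothesis (Q) for $u^n$ in the inductive step, so the very mechanism that forces (P) and (Q) to be proved jointly --- and the place where the restriction $H>\frac14$ enters --- is missing.
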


 On the other hand, an immediate consequence of \cite[Thm 3.9]{BJQ} is that,
for all $p\geq 2$,
\begin{equation}
 \sup_{n\geq 0} \sup_{(t,x) \in [0,T] \times \bR}E|u^n(t,x)|^p<\infty.
 \label{eq:0}
\end{equation}

Now, we aim to improve property (Q) above in the following sense.

\begin{proposition}\label{prop:1}
Let $p \geq 2$  and $h_0\in (0,1)$. Then, for any $n\geq 0$,
 \begin{equation}
\left.\begin{array}{rcl}
& & \displaystyle \sup_{(t,x) \in [0,T] \times \bR}\big( E [|u^n(t,x+h)-u^n(t,x)|^p]\big)^{\frac1p}
\leq C_n |h|^H \\
& & \displaystyle \sup_{(t,x) \in [0,T \wedge (T-h)] \times
\bR} \big( E[|u^n(t+h,x)-u^n(t,x)|^p]\big)^{\frac1p}  \leq C_n |h|^{\gamma},
\end{array} \right\} \label{propertyQp} \tag{Q'}
\end{equation}
for all $|h|\leq h_0$, where $\gamma=H$ for the wave equation, and $\gamma=\frac H2$ for the heat
equation, and the constant $C_n$ satisfies
\[
 C_n\leq C\, \big(c(h_0)+\bar{c}(h_0) C_{n-1}\big).
\]
The functions $c,\bar{c}:\bR\rightarrow \bR$ are non-negative and
$\lim_{h_0\rightarrow 0} \bar{c}(h_0)=0$. By definition, $C_{-1}=0$.
\end{proposition}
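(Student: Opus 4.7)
The plan is to prove Proposition \ref{prop:1} by induction on $n$, refining the argument used to establish property (Q) in \cite[Thm 3.9]{BJQ} so as to (i) upgrade the $L^2$ estimate to an $L^p$ estimate, and (ii) expose the precise dependence of the constant on $h_0$ and on $C_{n-1}$. For the base case $n=0$, the process $u^0=w$ is deterministic; the H\"older continuity of $u_0$ and $v_0$ of order $H$, together with the explicit formulas for $w$, directly produces bounds of the form $|w(t,x+h)-w(t,x)|\leq C|h|^H$ and $|w(t+h,x)-w(t,x)|\leq C|h|^\gamma$, so that $C_0\leq C\cdot c(h_0)$.

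For the inductive step, fix $n \geq 1$ and decompose the spatial increment as
\begin{equation*}
u^n(t,x+h) - u^n(t,x) = \big(w(t,x+h) - w(t,x)\big) + I_n(t,x,h),
\end{equation*}
where $I_n(t,x,h)$ is the stochastic integral of $\big(G_{t-s}(x+h-y) - G_{t-s}(x-y)\big)\sigma(u^{n-1}(s,y))$ against $X(ds,dy)$. I would then apply the Burkholder-type inequality for stochastic integrals against $X$ derived in \cite[Sec. 2]{BJQ}, together with Minkowski's inequality in $L^{p/2}$ (valid since $p\geq 2$), to reduce $\big(E|I_n|^p\big)^{2/p}$ to a deterministic integral involving $\Delta_h G(t-s,x,y) := G_{t-s}(x+h-y) - G_{t-s}(x-y)$, the $L^{p/2}$-norm of $\sigma(u^{n-1}(s,y))\sigma(u^{n-1}(s,z))$, and the spatial covariance of the noise (realised, in the spirit of \eqref{eq:48}, through a fractional Sobolev seminorm since $H<\frac12$).

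The key algebraic manoeuvre is then to split the product $\sigma(u^{n-1}(s,y))\sigma(u^{n-1}(s,z))$ using the identity $f(y)g(y)-f(z)g(z) = f(y)(g(y)-g(z)) + g(z)(f(y)-f(z))$, which produces two contributions. In the first, only $\sup_{s,y}\big(E|\sigma(u^{n-1}(s,y))|^p\big)^{1/p}$ appears, and this supremum is bounded independently of $n$ by \eqref{eq:0}; this contribution is controlled by purely deterministic estimates on $\Delta_h G$ and yields the term $C\cdot c(h_0)$. In the second, the increment $\big(E|u^{n-1}(s,y)-u^{n-1}(s,z)|^p\big)^{2/p}$ appears, and one applies the inductive hypothesis (Q') to $u^{n-1}$, producing the term $C\cdot \bar c(h_0)\cdot C_{n-1}$. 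The time increment is treated analogously, splitting $I_n$ into contributions over $[0,t]$ and $[t,t+h]$ and using the corresponding time-difference kernels of $G$; for the wave equation this is straightforward, while for the heat equation one exploits the semigroup structure to obtain the sharper exponent $\gamma = H/2$.

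The main obstacle, beyond careful bookkeeping, is ensuring that the coefficient $\bar c(h_0)$ of $C_{n-1}$ genuinely tends to $0$ as $h_0\to 0$, rather than merely being bounded. This forces the $L^2$-type seminorm of $\Delta_h G(t-s,x,\cdot)$ against the noise covariance to decay \emph{strictly faster} than $|h|^{2H}$, with the surplus power providing $\bar c(h_0)$. For the wave equation this is ensured by the fact that $\Delta_h G$ is supported on a set of Lebesgue measure $O(|h|)$; for the heat equation it follows from the Gaussian profile of $G_t$ and a dominated-convergence argument on the shrinking symmetric difference. The parallel verification for the time increment is the most delicate piece, and is where the different exponents for wave versus heat originate.
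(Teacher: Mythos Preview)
Your overall architecture matches the paper's: induction, BDG-type inequality plus Minkowski, and the algebraic splitting of $\sigma(u^{n-1}(s,y))\sigma(u^{n-1}(s,z))$ into a ``diagonal'' piece (controlled by the uniform moment bound \eqref{eq:0}) and an ``increment'' piece (to which the inductive hypothesis applies). The base case and the decomposition into $I_0,I_1,I_2$ (and the analogous time-increment terms) are exactly what the paper does.

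The genuine gap is in your handling of the increment piece and the mechanism you propose for $\bar c(h_0)\to 0$. You write that one simply ``applies the inductive hypothesis (Q') to $u^{n-1}$'' to the term involving $\big(E|u^{n-1}(s,y)-u^{n-1}(s,z)|^p\big)^{2/p}$. This cannot work as stated, for two reasons. First, (Q') is only valid for $|y-z|\le h_0$, so it does not cover the whole domain of the fractional seminorm integral. Second, even if it did, substituting $C_{n-1}^2|y-z|^{2H}$ and integrating against $|y-z|^{2H-2}$ gives $\int_{\bR}|z|^{4H-2}\,dz$, which diverges at infinity for every $H\in(\tfrac14,\tfrac12)$. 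The paper's key manoeuvre---which you are missing---is to split the $z$-integral into $\{|z|>h_0\}$ and $\{|z|\le h_0\}$. On $\{|z|>h_0\}$ one does \emph{not} use the inductive hypothesis at all but rather the uniform bound \eqref{eq:0}, yielding $\int_{|z|>h_0}|z|^{2H-2}dz = C h_0^{2H-1}$; this piece therefore contributes to $c(h_0)$, not to $\bar c(h_0)C_{n-1}$. On $\{|z|\le h_0\}$ one uses (Q'), obtaining $C_{n-1}^2\int_{|z|\le h_0}|z|^{4H-2}dz = C\,C_{n-1}^2\,h_0^{4H-1}$, and it is this factor $h_0^{4H-1}$ (together with the surplus $|h|^{1-2H}\le h_0^{1-2H}$ from the $L^2$ norm of $\Delta_h G$) that produces $\bar c(h_0)=h_0^{2H-\frac12+(\frac{1}{2H}-1)\gamma}\to 0$. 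Your proposed mechanism---extra decay of $\|\Delta_h G\|_{L^2}$ alone---supplies part of the surplus but cannot by itself make the integral finite or isolate the $C_{n-1}$-dependence correctly.
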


\begin{proof}
 We split the proof in four steps. We will only develop in detail the computations which are 
 relevant to attain our main objective, so that the reader will be directed to \cite{BJQ} 
 for similar arguments or computations. 

 \medskip

 \noindent {\it Step 1.} The case $n=0$ follows from the first part of the proof of
 \cite[Thm 3.7]{BJQ}. More precisely, for the wave equation we proved that
 \begin{equation}
  \sup_{(t,x) \in [0,T] \times \bR} |w(t,x+h)-w(t,x)|\leq C\, |h|^H
  \label{eq:1}
 \end{equation}
 and
 \[
  \sup_{(t,x) \in [0,T \wedge (T-h)] \times \bR} |w(t+h,x)-w(t,x)|\leq C\, \big(|h|^H +
  |h|\big) \leq C\, \big(1 + h_0^{1-H}\big) |h|^H.
 \]

 On the other hand, for the heat equation we obtained the same estimate (\ref{eq:1}) for the
 space increments, and
 \[
  \sup_{(t,x) \in [0,T \wedge (T-h)] \times \bR} |w(t+h,x)-w(t,x)|\leq C\, |h|^{\frac H2}.
 \]
Thus, we obtain condition (Q') with $C_0:= C\, \big(1 + h_0^{1-H}\big) \leq C$.

\medskip

\noindent {\it Step 2.} Induction step.
We first consider the space increments of $u^{n+1}$. We have, thanks to 
a [Burkholder-Davis-Gundy]-type inequality for stochastic integrals with respect to our fractional noise $X$
(see \cite[Thm. 2.9]{BJQ}), 
$$\big( E[|u^{n+1}(t,x+h)-u^{n+1}(t,x)|^p]\big)^{\frac1p} \leq C(I_0+I_1+I_2),$$
where $I_0=|w(t,x+h)-w(t,x)|$,
\begin{align*}
I_1 &= \left( E \left| \int_0^t \int_{\bR^2} |G_{t-s}(x+h-y)-G_{t-s}(x-y)|^2
\dfrac{|\sigma(u^n(s,y))-\sigma(u^n(s,z))|^2}{|y-z|^{2-2H}}\,dy \,dz\,ds \right|^{\frac p2}\right)^{\frac1p}\\
I_2 &= \left( E \left|\int_0^t \int_{\bR^2} \frac{|\sigma(u^n(s,z))|^2}{|y-z|^{2-2H}}\,
|(G_{t-s}(x+h-y)-G_{t-s}(x-y)) \right.\right.\\
& \qquad \qquad \qquad \qquad \qquad - (G_{t-s}(x+h-z)-G_{t-s}(x-z))|^2\,dy \,dz \,ds\Big|^{\frac p2}
\Bigg)^{\frac1p}.
\end{align*}
We have already proved that $I_0 \leq C_0 |h|^H$. Let us treat
$I_1$. By Minkowski's inequality and using that $\sigma$ is Lipschitz, we have
\begin{align*}
 I_1^2 & \leq C  \int_0^t \int_{\bR} |G_{t-s}(x+h-y)-G_{t-s}(x-y)|^2 \\
 & \qquad \qquad \times
 \left(\int_{\bR}\dfrac{ \big( E [ |u^n(s,y+z)-u^n(s,y)|^p] \big)^{\frac 2p}}{|z|^{2-2H}}\,dz\right)
 \,dy\,ds \\
 & =: C \,(I_1'+I_1''),
\end{align*}
where $I_1'$ and $I_1''$ denote the integrals corresponding to the
regions $\{|z|>h_0\}$, respectively $\{|z| \leq h_0\}$, in the $dz$
integral. By (\ref{eq:0}) and taking into account that
$\int_{|z|>h_0} |z|^{2H-2}dz= Ch_0^{2H-1}$, we have (as in page 22
of \cite{BJQ})
\begin{align*}
 I'_1 & \leq C\, h_0^{2H-1} \int_0^t \int_{\bR} |G_{t-s}(x+h-y)-G_{t-s}(x-y)|^2  \,dy\,ds\\
 & \leq C\, h_0^{2H-1} |h| = C\, h_0^{2H-1} |h|^{1-2H} |h|^{2H} \leq C\, |h|^{2H}.
\end{align*}
On the other hand, by the induction hypothesis and using that
$\int_{|z|\leq h_0} |z|^{4H-2}dz=C\,h_0^{4H-1}$, it holds
\begin{align*}
 I''_1 &\leq C \, C_n^2 \, h_0^{4H-1} |h| \leq C \, C_n^2 \, h_0^{2H} |h|^{2H}.
\end{align*}
Thus $I_1\leq C\, (1+h_0^H C_n) |h|^H$.

In order to deal with $I_2$, we apply again Minkowski's inequality, the linear growth on $\sigma$ and (\ref{eq:0}), and we argue as in the last part
of page 22 in \cite{BJQ}:
\begin{align*}
 I_2^2 & \leq C\, \int_0^t \int_{\bR}(1-\cos(h|\xi|)) \, |\cF G_{t-s}(\xi)|^2 \,|\xi|^{1-2H}\,d\xi \,ds \\
 & \leq C\, |h|^{2H},
\end{align*}
which implies that $I_2\leq C\, |h|^H$. Hence, putting together the
estimates for $I_0$, $I_1$ and $I_2$, we have proved that
\begin{equation}
 \sup_{(t,x) \in [0,T] \times \bR} \big( E[|u^{n+1}(t,x+h)-u^{n+1}(t,x)|^p]\big)^{\frac1p} \leq
 C\, \big(C_0 + h_0^H C_n\big) |h|^H.
 \label{eq:3}
\end{equation}

\medskip

\noindent {\it Step 3.} Let us now consider the time increments. We
consider the case $h\ge 0$, being similar the case $h<0$. We have
that
$$\big(E[|u^{n+1}(t+h,x)-u^{n+1}(t,x)|^p]\big)^{\frac1p}  \leq C (J_0+J_1+J_2),$$
where $J_0=|w(t+h,x)-w(t,x)|$ ,
\[
J_1 = \left( E \left| \int_{t}^{t+h}\int_{\bR^2}\dfrac{|G_{t+h-s}(x-y)\sigma(u^n(s,y))-G_{t+h-s}(x-z)
\sigma(u^n(s,z))|^2}{|y-z|^{2-2H}}\,dy\,dz\,ds\right|^{\frac p2} \right)^{\frac 1p},
\]
\begin{align*}
J_2& =  \left( E \left|\int_0^t \int_{\bR^2}|(G_{t+h-s}(x-y)-G_{t-s}(x-y))\sigma(u^n(s,y)) \right.\right.\\
&  \quad \quad \qquad
- (G_{t+h-s}(x-z)-G_{t-s}(x-z))\sigma(u^n(s,z))|^2|y-z|^{2H-2}\,dy\,dz \,ds\Big|^{\frac p2} \Bigg)^{\frac 1p}.
\end{align*}
We have already seen that $J_0 \leq C_0 |h|^\gamma$.

As far as $J_1$ is concerned, we apply Minkowski's inequality and we add and subtract the term
$G_{t+h-s}(x-y)\sigma(u^n(s,z))$. We obtain
\begin{align*}
 J_1^2 & \leq \int_{t}^{t+h}\int_{\bR^2}\dfrac{ \big( E [|G_{t+h-s}(x-y)\sigma(u^n(s,y))-G_{t+h-s}(x-z)
\sigma(u^n(s,z))|^p]\big)^{\frac2p}}{|y-z|^{2-2H}}\,dy\,dz\,ds\\
& \leq C\, (J_{11}+J_{12}),
\end{align*}
where
\[
J_{11} = \int_t^{t+h} \int_{\bR^2} G_{t+h-s}^2(x-y) \dfrac{\big(
E[|\sigma(u^n(s,y))-\sigma(u^n(s,z))|^p] \big)^{\frac2p}
}{|y-z|^{2-2H}}\,dz\,dy\,ds,
\]
\[
J_{12} = \int_{t}^{t+h} \int_{\bR^2}\big(E[|\sigma(u^n(s,z))|^p]\big)^{\frac2p} \,\dfrac{|G_{t+h-s}(x-y) -G_{t+h-s}(x-z)|^2}{ |y-z|^{2-2H}}\,dy\,dz \,ds.
\]
First, we have
\begin{align*}
 J_{11} & \leq C\,
 \int_t^{t+h} \int_{\bR} G_{t+h-s}^2(x-y) \left(\int_{\bR}\dfrac{\big( E[|u_n(s,y+z)-u_n(s,z)|^p]\big)^{\frac2p}}{|z|^{2-2H}}\,dz\right)\,dy\,ds \\
 & = C\, (J_{11}'+J_{11}''),
\end{align*}
where the latter are defined by splitting the $dz$ integrals into
two integrals corresponding to the regions $\{|z|>h_0\}$ and,
$\{|z|\leq h_0\}$, respectively. By (\ref{eq:0}) and using that
$\int_{|z|>h_0} |z|^{2H-2}dz= C\,h_0^{2H-1}$, we obtain (as in page
23 of \cite{BJQ})
\begin{align*}
 J_{11}'& \leq C\, h_0^{2H-1} \int_0^h \int_{\bR}G_s^2(s,y)\,dy \,ds \\
 & \leq C\, h_0^{2H-1} |h|^{\frac{\gamma}{H}}= C\, h_0^{2H-1+(\frac1H-2)\gamma} |h|^{2\gamma},
\end{align*}
where we recall that $\gamma=H$ for the wave equation and $\gamma=\frac H2$ for the heat
equation.

On the other hand, by the induction hypothesis, and using that
$\int_{|z|\leq h_0} |z|^{4H-2}dz=C\,h_0^{4H-1}$, and  that
$\frac{\gamma}{H}-2\gamma>0$ for any $\gamma>0$, we have
\[
 J_{11}''\leq C\, h_0^{4H-1}\, C_n^2 \, |h|^{\frac{\gamma}{H}} \leq C\, C_n^2 \, h_0^{4H-1+\frac{\gamma}{H}-2\gamma} |h|^{2\gamma}.
\]
Observe that the quantity $4H-1 + \frac{\gamma}{H}-2\gamma$ is always positive.

Let us now deal with $J_{12}$. Indeed, by (\ref{eq:0}) and \cite[Prop. 2.8]{BJQ} we have 
\[
 J_{12}\leq C\, \int_{0}^{h} \int_{\bR} |\cF G_{r}(\xi)|^2 \,|\xi|^{1-2H}\, d\xi dr.
\]
By Lemma 3.1 in \cite{BJQ}, the last integral is equal to $C h^{2H+1}$ for the wave equation, and $C h^{H}$ for the heat equation.
Hence,
\[
 J_{12}\leq C\, \big(1+h_0) |h|^{2\gamma}.
\]
Thus, we have proved that
\[
 J_1\leq C\, \big(1 + h_0^\frac12 + h_0^{H-\frac12+(\frac{1}{2H}-1)\gamma} + C_n\, h_0^{2H-\frac12+(\frac{1}{2H}-1)\gamma} \big) |h|^\gamma.
\]

Now we treat the term $J_2$. Arguing as in page 23 of \cite{BJQ} and applying Minkowski's inequality, we can infer that
$J_2^2\leq C\, (J_{21}+J_{22})$, where
\begin{align*}
J_{21} & =  \int_0^t \int_{\bR^2}|G_{t+h-s}(x-y)-G_{t-s}(x-y)|^2\\
&  \quad \quad \qquad
\big(E[|\sigma(u^n(s,y))-\sigma(u^n(s,z))|^p]\big)^{\frac2p}
\,|y-z|^{2H-2}\,dy \,dz \,ds\\
& =  \int_0^t \int_{\bR^2}|G_{t+h-s}(x-y)-G_{t-s}(x-y)|^2\\
&  \quad \quad \qquad
\big(E[|\sigma(u^n(s,y))-\sigma(u^n(s,y+z))|^p]\big)^{\frac2p}
\,|z|^{2H-2}\,dy \,dz \,ds,
\end{align*}
\begin{align*}
J_{22} &= \int_0^t \int_{\bR^2} \big(E[|\sigma(u^n(s,z))|^p]\big)^{\frac2p} \, |(G_{t+h-s}(x-y)-G_{t-s}(x-y))-\\
&  \quad \quad \qquad (G_{t+h-s}(x-z)-G_{t-s}(x-z))|^2 \,|y-z|^{2H-2}\,dy\,dz\,ds.
\end{align*}
Similarly as before, we have
$J_{21} \leq C\,(J_{21}'+J_{21}'')$, where $J_{21}'$ and $J_{21}''$ are integrals corresponding to the regions $\{|z|>h_0\}$, respectively $\{|z| \leq h_0\}$.
Then,
\begin{align*}
 J_{21}' & \leq C\, h_0^{2H-1} \int_0^t \int_{\bR}|G_{t+h-s}(x-y)-G_{t-s}(x-y)|^2 dy\,ds \\
 & \leq C\, h_0^{2H-1} |h|^{\frac{\gamma}{H}} \leq C\, h_0^{2H-1+ (\frac1H-2)\gamma} |h|^{2\gamma}.
\end{align*}
On the other hand, by Lipschitz condition and the induction
hypothesis, we have
\[
 J_{21}''\leq C\, h_0^{4H-1}\, C_n^ 2\, |h|^{\frac{\gamma}{H}}\leq C\, C_n^2 \, h_0^{4H-1+\frac{\gamma}{H}-2\gamma} |h|^{2\gamma}.
\]
Finally, using that $E[|\sigma(u^n(s,z))|^p]$ is uniformly bounded
  on $s$, $z$ and $n$, and  Proposition 2.8 of \cite{BJQ},
\begin{align*}
 J_{22}&\leq C\, \int_0^t \int_{\bR}|\cF G_{t+h-s}(\xi)-\cF G_{t-s}(\xi)|^2 \, |\xi|^{1-2H}\,d\xi\,ds\\
 & \leq C\, |h|^{2\gamma}.
\end{align*}
Hence, we have obtained that
\[
 J_2\leq C\, \big(1 + h_0^{H-\frac12+(\frac{1}{2H}-1)\gamma} + C_n\, h_0^{2H-\frac12+(\frac{1}{2H}-1)\gamma} \big) |h|^\gamma.
\]

Putting together the bounds for $J_0$, $J_1$ and $J_2$, we get
\begin{align}
 & \sup_{(t,x) \in [0,T \wedge (T-h)] \times \bR} \big(E[|u^{n+1}(t+h,x)-u^{n+1}(t,x)|^p]\big)^{\frac1p} \nonumber \\
 & \qquad \qquad \leq C\, \big(1 + C_0 + h_0^\frac12 + h_0^{H-\frac12+(\frac{1}{2H}-1)\gamma} + C_n\, h_0^{2H-\frac12+(\frac{1}{2H}-1)\gamma} \big) |h|^\gamma.
 \label{eq:2}
\end{align}

\medskip

\noindent {\it Step 4.} Finally, by estimates (\ref{eq:3}) and (\ref{eq:2})
we have property (Q') with a constant $C_{n+1}:= C\, \big(c(h_0)+\bar{c}(h_0) C_n\big)$,
where
\[
 c(h_0)= 1 + C_0 + h_0^\frac12 + h_0^{H-\frac12+(\frac{1}{2H}-1)\gamma}
\]
and
\[
 \bar{c}(h_0) = h_0^{2H-\frac12+(\frac{1}{2H}-1)\gamma}.
\]
Observe that for any $\gamma>0$   it holds that
$2H-\frac12+(\frac{1}{2H}-1)\gamma>0$. Hence $\lim_{h_0\rightarrow
0} \bar{c}(h_0)=0$. This concludes the proof.
\end{proof}

\medskip

Now, we are in position to prove our main result.

\medskip

\noindent {\em Proof of Theorem \ref{main-th}}.
Let us first prove that, choosing a small enough $h_0\in (0,1)$, the sequence of constants $(C_n)_{n\geq 0}$ in property (Q') is bounded.
Indeed, using the recursion for the constant $C_{n+1}$, one easily verifies that,
for all $n\geq 0$,
\begin{align*}
 C_{n+1} & = C c(h_0)\big(1+C \bar{c}(h_0) + [C \bar{c}(h_0)]^2  +\cdots +
 [C \bar{c}(h_0)]^n\big) + [C \bar{c}(h_0)]^{n+1} C_0 \\
 & \leq \max(C c(h_0),C_0) \sum_{k=0}^{n+1} [C \bar{c}(h_0)]^k,
\end{align*}
where we recall that $C_0=C\, \big(1+h_0^{1-H}\big)$. Thus, choosing $h_0$
small enough such that $C \bar{c}(h_0)<1$, we get that $\sup_{n\geq 1} C_n<+\infty$.

Hence, we have obtained the validity of property (Q') with the constant $C_n$ replaced by a constant $C$,
which does not depend on $n$. At this point, taking limits as $n$ tends to infinity in (Q'), 
one gets the first part of the statement,
since we already know that $u^n$ converges to $u$ in $L^2(\Omega)$, uniformly in time and space.

The second part of the statement follows from a $d$-dimensional
parameter version of Kolmogorov criterion of continuity; see, for
instance,  Theorem 1.4.1 of \cite{ku}. The proof is complete. \qed

\medskip

\begin{remark}
{\rm An important consequence of Theorem \ref{main-th} is that, in
fact, the solutions of our SPDEs  belong to a smaller space than the
space $\mathcal{X}_p$ defined in the Introduction (see also \cite[Def. 3.6]{BJQ}). 
Precisely, for any $p\geq 2$, they belong to the space of adapted random fields 
$\{u(t,x); t\in [0,T], x\in \bR\}$ satisfying the following three conditions:
$$\sup_{(t,x) \in [0,T] \times \bR}E[|u(t,x)|^p]<\infty,$$
$$\sup_{(t,x) \in [0,T] \times \bR} \big( E|u(t,x+h)-u(t,x)|^p\big)^{\frac1p} \leq C_p\, |h|^H$$
and
$$\sup_{(t,x) \in [0,T \wedge (T-h)] \times \bR} \big(E|u(t+h,x)-u(t,x)|^p\big)^{\frac1p}
  \leq C_p\, |h|^{\gamma}.$$
Indeed, it is easy to see, using the usual argument of splitting the
$dz$ integral, that the above conditions imply that
\[
 \sup_{(t,x) \in [0,T] \times \bR} \int_0^t \int_{\bR^2}G_{t-s}^2(x-y)\frac{\Big(E|u(s,y)-u(s,z)|^p\Big)^{2/p}}{|y-z|^{2-2H}}\,dy \,dz \,ds<\infty.
\]
On the other hand, the processes belonging to the intersection for
all $p\ge 2$ of these spaces have versions with
$(\gamma',H')$-H\" older continuous paths for any $\gamma'<\gamma$ and
any $H'<H$.

}
\end{remark}

\section*{Acknowledgement}

The authors would like to thank Robert Dalang for his valuable comments, which allowed us to improve condition (Q) and so prove the 
main result of the present note.

\end{document}